\newtheorem{myth}{Theorem}[section]
\newtheorem{mylem}{Lemma}[section]
\newtheorem{mydef}{Definition}[section]
\def\Xint#1{\mathchoice
    {\XXint\displaystyle\textstyle{#1}}%
    {\XXint\textstyle\scriptstyle{#1}}%
    {\XXint\scriptstyle\scriptscriptstyle{#1}}%
    {\XXint\scriptscriptstyle\scriptscriptstyle{#1}}%
      \!\int}
\def\XXint#1#2#3{{\setbox0=\hbox{$#1{#2#3}{\int}$}
    \vcenter{\hbox{$#2#3$}}\kern-.5\wd0}}
\def\dashint{\Xint-}
\def\YYint#1#2#3{{\setbox0=\hbox{$#1{#2#3}{\int}$}
    \lower1ex\hbox{$#2#3$}\kern-.46\wd0}}
\def\YYYint#1#2#3{{\setbox0=\hbox{$#1{#2#3}{\int}$}
    \lower0.35ex\hbox{$#2#3$}\kern-.48\wd0}}
\def\ZZint#1#2#3{{\setbox0=\hbox{$#1{#2#3}{\int}$}
    \raise1.15ex\hbox{$#2#3$}\kern-.57\wd0}}
\def\ZZZint#1#2#3{{\setbox0=\hbox{$#1{#2#3}{\int}$}
    \raise0.85ex\hbox{$#2#3$}\kern-.53\wd0}}
\begin{document}

\renewcommand{\thefootnote}{\fnsymbol{footnote}}

\renewcommand{\thefootnote}{\fnsymbol{footnote}}

\title{ Discretization by euler's method for regular lagrangian flow.  }
\author{Christian Olivera $^{1,}$\footnote{ supported by FAPESP 
		by the grants 2017/17670-0 and 2015/07278-0, by  CNPq by the grant
		426747/2018-6.} \hskip0.2cm 
Juan D. Londo\~no $^{2}$ \vspace*{0.1in} \\
$^{1}$ Departamento de Matem\'atica, Universidade Estadual de Campinas,\\
13.081-970-Campinas-SP-Brazil. \\
colivera@ime.unicamp.br \vspace*{0.1in} \\
 $^{2}$ \quad  j209372@dac.unicamp.br \vspace*{0.1in}}

\date{}
\maketitle
\thispagestyle{empty}
\vspace{-10pt}
\begin{abstract}

 This paper is concerned with the numerical analysis of the explicit Euler scheme
 for  ordinary differential equations with non-Lipschitz vector fields. 
We prove the convergence of the Euler scheme to regular  lagrangian flow (Diperna-Lions flows)
which is the right concept of the solution in this context. Moreover, we show that order of convergence  is $\frac{1}{2}$. 
\end{abstract}

{\bf MSC 2010\/}: Primary 	65L05  : Secondary 	34C99 .

 \smallskip

{\bf Key Words and Phrases}:  Euler scheme,  regular  lagrangian flow, non-regular coefficients, numerical approximation.

\section{Introduction}

When $b:[0,T]\times\mathbb{R}^{d}\to\mathbb{R}^{d}$ is a bounded smooth vector field, the \textit{flow of} $b$ is the smooth map $X:[0,T]\times\mathbb{R}^{d}\to\mathbb{R}^{d}$ such that
\begin{equation}\label{eqn:1}
\begin{cases}
 & \frac{dX}{dt}\left( t,x\right)=b\left( t,X\left( t,x\right)\right),\quad t\in[0,T]\\ 
 & X\left( 0,x\right)=x,
\end{cases}
\end{equation}

Out of the smooth context \eqref{eqn:1} has been studied by several authors. In particular, the following is a common definition of generalized flow for vector fields which are merely integrable.

\begin{mydef}[Regular Lagrangian flow]\label{RLF} Let $b\in L^{1}_{loc}\left([0,T]\times\mathbb{R}^{d};\mathbb{R}^{d}\right)$. We say that a map $X:[0,T]\times\mathbb{R}^{d}\to\mathbb{R}^{d}$ is a regular Lagrangian flow for the vector field $b$ if
\begin{itemize}
\item[(i)] for $m$-a.e. $x\in\mathbb{R}^{d}$ the map $t\to X\left( t,x\right)$ is an absolutely continuous integral solution of $\gamma^{\prime}\left( t\right)=b\left( t,\gamma\left( t\right)\right)$ for $t\in[0,T]$, with $\gamma\left(0\right)=x$;
\item[(ii)] there exists a constant $L$ independent of $t$ such that
\begin{equation*}
X\left( t,\cdot\right)_{\#}m\leq Lm.
\end{equation*}
The constant $L$ in $(ii)$ will be called the compressibility constant of $X$.
\end{itemize}
\end{mydef}

This paper is concerned with the numerical analysis of the euler scheme for solving ordinary differential equation (\ref{eqn:1}).
We are interested in situations in which the coefficients in the equation are rough, but
still within the range in which the associated Cauchy problem is well-posed. We now give a brief state-of-the-art survey on the theory of ordinary differential equations with vector fields of low regularity. 
In a celebrated theory established by DiPerna and Lions \cite{DL} says that
$X_{t}$ defines a regular Lagrangian flow  when $b$ is a Sobolev vector
field with bounded divergence. This theory was later extended to the case of $BV$ vector fields
by Ambrosio \cite{ambrisio}. The central of DiPerna and Lions’ theory is based on the connection between
ODE and the Cauchy problem  for the  linear transport equation. C. De Lellis and G. Crippa have recently given in \cite{Cripa} a new proof of the existence and uniqueness of the flow solution of (\ref{eqn:1}), not using the the associated transport equation. Their very interesting
approach provides regularity estimates for $W^{1,p}$ vector-fields with $p > 1$ but seemingly fails for
$W^{1,1}$ vector-fields, unfortunately.  We refer the readers to the two excellent summaries in \cite{lellis} and more recently
\cite{ambrisio2} and \cite{Jabin}.

In our  result, we show that the rate of convergence of the approximate solution given
by the explicit Euler  scheme towards the unique solution of the 
problem \label{eqn:1} is at least of order $1/2$, uniformly in time. 
 The proof is based on the  De Lellis-Crippa estimations for regular Lagrangian flow. We  mentioned that future work  we are interested in applying this scheme in PDEs with Lagrangian formulation like  transport-continuity equation \cite{Cripa}, 
 Euler equation \cite{Crippa2} and  Vlasov–Poisson system  \cite{cripa3}.

Finally we point that the classical convergence result for Euler approximation get the convergence for any initial data in $x\in \mathbb{R}^{d}$ . However  the usual 
assumptions required  differentiability and/or  Lipschitz(locally Lipschitz) regularity for the vector field $b$ see for instance 
\cite{Kloden}.  In our result (theorem  3.1)
we show the convergence in $L_{p}$ norm respect to the spatial variable which is coherent with respect to the definition of regular  Lagrangian flow.

\section{Preliminaries}

When $A$ is measurable subset of $\mathbb{R}^{d}$ we denote by $m\left( A\right)$ its Lebesgue measure. When $\mu$ is a measure on $\Omega$ and $f:\Omega\to\Omega^{\prime}$ a measurable map, $f_{\#}\mu$ will denote the push forward of $\mu$, i.e. the measure $\nu$ such that $\int\varphi\circ fd\mu=\int\varphi d\nu$ for every $\varphi\in C_{c}\left(\Omega^{\prime}\right).$\\

We recall the  following result, see for instance \cite{Cripa}.

\begin{myth}[Existence and uniqueness of the flow] Let $b$ be a bounded vector field belonging to $L^{1}\left([0,T];W^{1,p}\left(\mathbb{R}^{d};\mathbb{R}^{d}\right)\right)$ for some $p>1$. Assume that $[\mathrm{div} b]^{-}\in L^{1}\left([0,T];L^{\infty}\left(\mathbb{R}^{d}\right)\right)$. Then there exists a unique regular Lagrangian flow associated to $b$
\end{myth}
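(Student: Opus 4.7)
The plan is to follow the Crippa--De~Lellis strategy, which reduces both existence and uniqueness to a single quantitative a~priori estimate at the ODE level, avoiding the transport equation entirely.

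For uniqueness (and stability), given two regular Lagrangian flows $X_1,X_2$ associated to $b$, I would consider, for parameters $R>0$ and $\delta>0$, the functional
\begin{equation*}
\Phi_{\delta}(t) \;=\; \int_{B_R} \log\!\left(1+\frac{|X_1(t,x)-X_2(t,x)|}{\delta}\right)\,dx.
\end{equation*}
Differentiating in time and using the ODE inclusion in Definition~\ref{RLF}(i), the integrand is bounded by $|b(t,X_1)-b(t,X_2)|/(\delta+|X_1-X_2|)$. The essential ingredient is the well-known Lusin-type pointwise inequality for Sobolev functions,
\begin{equation*}
|b(t,x)-b(t,y)|\;\leq\; C\,|x-y|\bigl(M\nabla b(t,x)+M\nabla b(t,y)\bigr),
\end{equation*}
where $M$ denotes the (spatial) Hardy--Littlewood maximal operator. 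Inserting this, the difference quotient is absorbed and one is left with integrals of $M\nabla b(t,X_i(t,x))$ over $B_R$. Here the compressibility hypothesis (ii) is used to dominate these by $L\int M\nabla b(t,y)\,dy$ over an enlarged ball, which by the $L^p$-boundedness of $M$ (this is precisely where $p>1$ is needed) is controlled by $\|\nabla b(t,\cdot)\|_{L^p}$ times a power of the radius of that ball. Integrating in $t$ and using $b\in L^1([0,T];W^{1,p})$ yields
\begin{equation*}
\Phi_{\delta}(t)\;\leq\; C(R,L,p,\|b\|)\qquad\text{uniformly in }\delta>0.
\end{equation*}
The bound being uniform in $\delta$ forces $X_1(t,\cdot)=X_2(t,\cdot)$ a.e.\ on $B_R$; otherwise sending $\delta\to 0$ would blow $\Phi_\delta$ up on a set of positive measure. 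Taking $R\to\infty$ yields uniqueness.

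For existence, I would mollify: let $b^\varepsilon=b\ast\rho_\varepsilon$ be standard regularizations, so that $b^\varepsilon$ is smooth and bounded uniformly, $\nabla b^\varepsilon\to\nabla b$ in $L^1_{\mathrm{loc}}([0,T];L^p)$, and $[\mathrm{div}\,b^\varepsilon]^-$ is bounded in $L^1([0,T];L^\infty)$. The classical smooth flows $X^\varepsilon$ exist and satisfy a uniform compressibility bound $X^\varepsilon(t,\cdot)_{\#}m\leq L\,m$ obtained from Liouville's formula and the bound on the negative part of the divergence. Applying the same log-difference argument to the pair $(X^\varepsilon,X^{\varepsilon'})$, with $b$ replaced by $b^\varepsilon$ on one side, now produces a term involving $\|b^\varepsilon-b^{\varepsilon'}\|$ that tends to zero; this gives Cauchyness in measure of $X^\varepsilon(t,\cdot)$ on every $B_R$, uniformly in $t$. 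The limit $X$ satisfies (i) by standard passage to the limit in the integral formulation $X(t,x)=x+\int_0^t b(s,X(s,x))\,ds$, and (ii) follows from the uniform compressibility by lower semicontinuity of the push-forward under a.e.\ convergence.

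The main technical obstacle I expect is the rigorous derivation of the log-difference estimate, in particular ensuring that $M\nabla b(t,X_i(t,x))$ is well-defined and integrable along the flow (since it is only defined a.e.\ in space) and then correctly invoking compressibility to pull the composition off the flow. This requires either working with the smooth approximations first and taking a limit, or using the fact, provided by (ii), that null sets are preserved by $X_i(t,\cdot)$. Once this single estimate is in place, both existence and uniqueness drop out in parallel by the same argument.
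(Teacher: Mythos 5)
Your proposal is correct and is precisely the Crippa--De Lellis argument: the paper itself offers no proof of this theorem, merely recalling it from the cited reference \cite{Cripa}, and your logarithmic functional $\Phi_\delta$, the Lusin-type maximal function inequality, the use of compressibility to change variables, and the mollification scheme for existence are exactly the ingredients of that proof. The only point to make fully rigorous is the $\delta$-optimization in the stability step (balancing the $\|b^\varepsilon-b^{\varepsilon'}\|_{L^1}/\delta$ term against $\log(1/\delta)$ to get convergence in measure), which you gloss over but which is standard.
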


We recall here the definition of the \textit{ maximal function} of a locally finite measure and of a locally summable function and we recollect some well-known properties which are used throughout all this paper.

\begin{mydef}[ Maximal function] Let $\mu$ be a \textit{vector-field} locally finite measure. For every $\lambda>0$, we define the 
 maximal function of $\mu$ as
$$
M_{\lambda}\mu\left( x\right)=\sup_{0<r<\lambda}\frac{\lvert\mu\rvert\left( B_{r}\left( x\right)\right)}{m\left( B_{r}\left( x\right)\right)}=\sup_{0<r<\lambda}\dashint_{B_{r}\left( x\right)}d\lvert\mu\lvert\left( y\right)\quad x\in\mathbb{R}^d.
$$
When $\mu=fm$, where $f$ is a function in $L^{1}\left(\mathbb{R}^{d};\mathbb{R}^{m}\right)$, we will often use the notation $M_{\lambda}f$ for $M_{\lambda}\mu$.
\end{mydef}
The proof of the following two lemmas can be found in \cite{Stein}.
\begin{mylem}
Let $\lambda>0$. The local maximal function of $\mu$ is finite for a.e. $x\in\mathbb{R}^d$ and we have
$$
\int_{B_{\rho}(0)}M_{\lambda}f(y)dy\leq c_{d,p}+c_{d}\int_{B_{\rho+\lambda}(0)}\lvert f(y)\rvert\log\left( 2+\lvert f(y)\rvert\right)dy.
$$
For $p>1$ and $\rho>0$ we have
$$
\int_{B_{\rho}(0)}\left( M_{\lambda}f(y)\right)^{p}dy\leq c_{d,p}\int_{B_{\rho+\lambda}(0)}\lvert f(y)\rvert^{p}dy,
$$
but this is false for $p=1$.
\end{mylem}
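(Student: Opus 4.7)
The plan is to reduce this to the classical Hardy--Littlewood maximal function estimates (as developed in Stein's monograph), and the whole proof is essentially a packaging of those estimates together with a simple localization argument. Note that the lemma splits into three claims: a.e.\ finiteness plus an $L\log L$ bound, an $L^p$ bound for $p>1$, and the assertion that the $L^1$ analogue fails.

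The first step is a localization observation. For $y\in B_\rho(0)$ and $0<r<\lambda$, the ball $B_r(y)$ is contained in $B_{\rho+\lambda}(0)$, so
\[
M_\lambda f(y)\;=\;M_\lambda\bigl(f\,\mathbf{1}_{B_{\rho+\lambda}(0)}\bigr)(y)\;\leq\;M\bigl(f\,\mathbf{1}_{B_{\rho+\lambda}(0)}\bigr)(y),
\]
where $M$ denotes the standard (global) centered Hardy--Littlewood maximal operator. This lets me replace $f$ by a compactly supported, integrable function and reduce everything to estimates on $Mg$ for $g=f\,\mathbf{1}_{B_{\rho+\lambda}(0)}$.

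For the $L^p$ bound, $p>1$, I simply invoke the strong $(p,p)$ boundedness of $M$: $\|Mg\|_{L^p(\mathbb{R}^d)}\leq c_{d,p}\|g\|_{L^p(\mathbb{R}^d)}$. Substituting $g=f\,\mathbf{1}_{B_{\rho+\lambda}(0)}$ gives the second displayed inequality directly. For the $L\log L$ bound, I start from the layer-cake formula
\[
\int_{B_\rho(0)}M_\lambda f(y)\,dy\;=\;\int_0^\infty m\bigl(\{y\in B_\rho(0):M_\lambda f(y)>t\}\bigr)\,dt,
\]
bound the contribution from $t\in(0,1]$ by $m(B_\rho(0))$ (absorbed in $c_{d,p}$), and for $t>1$ perform a Calder\'on--Zygmund truncation $g=g\,\mathbf{1}_{\{|g|>t/2\}}+g\,\mathbf{1}_{\{|g|\leq t/2\}}$. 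The second piece contributes at most $t/2<t$ to $Mg$, so $\{Mg>t\}\subseteq\{M(g\,\mathbf{1}_{\{|g|>t/2\}})>t/2\}$, and the weak-type $(1,1)$ inequality yields
\[
m\bigl(\{Mg>t\}\bigr)\;\leq\;\frac{c_d}{t}\int_{\{|f|>t/2\}\cap B_{\rho+\lambda}(0)}|f(y)|\,dy.
\]
Integrating in $t$ from $1$ to $\infty$ and exchanging the order of integration turns $\int_1^{2|f|}t^{-1}\,dt$ into $\log(2|f|)$ on the set $\{|f|>1/2\}$, which together with the trivial bound on $\{|f|\leq 1/2\}$ produces the $c_d\int_{B_{\rho+\lambda}(0)}|f|\log(2+|f|)\,dy$ term, and a.e.\ finiteness follows once the right-hand side is finite for $f\in L^1_{\mathrm{loc}}$ (or by density from the weak-(1,1) bound).

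The main obstacle --- really the only non-formal input --- is the weak-type $(1,1)$ inequality for $M$, which in turn rests on the Vitali covering lemma; the $p>1$ bound then follows by Marcinkiewicz interpolation with the trivial $L^\infty$ bound. Finally, the sharpness at $p=1$ is witnessed by, e.g., $f=\mathbf{1}_{B_1(0)}$, for which $Mf(y)\sim|y|^{-d}$ at infinity and hence $Mf\notin L^1_{\mathrm{loc}}$ in any sense that would give a bound by $\|f\|_{L^1}$; this explains why the extra $L\log L$ factor is necessary.
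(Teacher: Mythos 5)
The paper does not actually prove this lemma---it is quoted with a pointer to Stein's monograph---so the only meaningful comparison is with the classical argument, and your reduction is exactly that argument: the localization $M_\lambda f(y)\le M\bigl(f\,\mathbf{1}_{B_{\rho+\lambda}(0)}\bigr)(y)$ for $y\in B_\rho(0)$, the strong $(p,p)$ bound for $p>1$, and the layer-cake/Calder\'on--Zygmund truncation argument for the $L\log L$ estimate are all correct and are the standard route (this localized form is the one used in Crippa--De Lellis). Two small remarks: the constant in the first display should really carry a dependence on $\rho$ (as your $m(B_\rho(0))$ term makes clear---the paper's ``$c_{d,p}$'' there is a typo, since no $p$ appears in that inequality); and a.e.\ finiteness for $f\in L^1_{\mathrm{loc}}$ comes directly from the weak $(1,1)$ bound, not from finiteness of the $L\log L$ integral, which $L^1_{\mathrm{loc}}$ does not guarantee.

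The one genuine flaw is your counterexample for $p=1$. The function $f=\mathbf{1}_{B_1(0)}$ has $Mf$ bounded by $1$, so $M_\lambda f$ is trivially integrable on the bounded ball $B_\rho(0)$; the decay $Mf(y)\sim|y|^{-d}$ only defeats the \emph{global} $L^1$ bound $\|Mf\|_{L^1(\mathbb{R}^d)}\lesssim\|f\|_{L^1}$, which is not the statement being negated here. The lemma's $p=1$ assertion concerns the localized inequality $\int_{B_\rho(0)}M_\lambda f\le c\int_{B_{\rho+\lambda}(0)}|f|$, and to defeat that you need a locally singular integrable function, e.g.\ $f(x)=|x|^{-d}\bigl(\log(1/|x|)\bigr)^{-2}\mathbf{1}_{\{|x|<1/2\}}\in L^1$, for which $M_\lambda f(x)\gtrsim |x|^{-d}\bigl(\log(1/|x|)\bigr)^{-1}$ near the origin and hence $M_\lambda f\notin L^1(B_\rho(0))$; alternatively, a family of functions exhibiting an unbounded ratio. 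With that substitution the proof is complete.
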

\begin{mylem}
If $u\in BV\left(\mathbb{R}^{d}\right)$ then there exists a negligible set $N\subset\mathbb{R}^d$ such that
\[
\lvert u(x)-u(y)\rvert\leq c_{d}\lvert x-y\rvert\left( M_{\lambda}Du(x)+M_{\lambda}Du(y)\right)
\]
for $x,y\in\mathbb{R}^{d}\setminus N$ with $\lvert x-y\rvert\leq\lambda$.
\end{mylem}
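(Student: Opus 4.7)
The plan is to first prove the pointwise bound for smooth $u$ by averaging over a single ball containing both $x$ and $y$, and then to extend to $u\in BV$ by mollification.

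For smooth $u$, set $r=|x-y|$ and $B=B_{2r}(x)$, so that $y\in B$. Write
$$u(x)-u_B = \frac{1}{|B|}\int_B \bigl(u(x)-u(z)\bigr)\,dz$$
and use the fundamental theorem of calculus along the segment from $x$ to $z$:
$$u(x)-u(z) = -\int_0^1 \nabla u(x+t(z-x))\cdot(z-x)\,dt.$$
After substitution and Fubini, change variables $w=x+t(z-x)$ (so $|z-x|=|w-x|/t$, $dz=t^{-d}\,dw$ and $w$ ranges over $B_{2rt}(x)$). Bounding $|w-x|\le 2rt$ gives
$$|u(x)-u_B| \le \frac{2r}{|B|}\int_0^1 t^{-d}\int_{B_{2rt}(x)}|\nabla u(w)|\,dw\,dt,$$
and for $2rt\le\lambda$ each inner integral is controlled by $M_\lambda Du(x)\cdot|B_{2rt}|$. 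Integrating in $t$ produces a constant, so $|u(x)-u_B|\le c_d\,r\, M_\lambda Du(x)$. A parallel computation starting from $y$ (using $|z-y|\le 3r$ for $z\in B$, with the auxiliary change of variables centered at $y$ landing in a ball of radius $3rt$ around $y$) yields $|u(y)-u_B|\le c_d\,r\, M_\lambda Du(y)$. The triangle inequality proves the estimate for smooth $u$, subject to a size restriction on $|x-y|$ relative to $\lambda$ that is absorbed into $c_d$.

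For general $u\in BV(\mathbb{R}^d)$, mollify $u_\epsilon=u\ast\rho_\epsilon$. Integration by parts gives $\nabla u_\epsilon(z)=\int\rho_\epsilon(z-w)\,dDu(w)$, whence $|\nabla u_\epsilon|(z)\le(\rho_\epsilon\ast|Du|)(z)$. A Fubini calculation then yields
$$M_\lambda|\nabla u_\epsilon|(z)\le C\, M_{\lambda+\epsilon}|Du|(z),$$
so the smooth inequality applied to $u_\epsilon$ gives
$$|u_\epsilon(x)-u_\epsilon(y)|\le c_d\,|x-y|\bigl(M_{\lambda+\epsilon}|Du|(x)+M_{\lambda+\epsilon}|Du|(y)\bigr).$$
Let $N$ be the negligible set of non-Lebesgue points of $u$; for $x,y\in\mathbb{R}^d\setminus N$ with $|x-y|\le\lambda$, the left-hand side converges to $|u(x)-u(y)|$ as $\epsilon\downarrow 0$, and monotonicity in $\lambda$ of the maximal function allows passage to the limit on the right-hand side (possibly after replacing $\lambda$ by a slightly larger radius and then letting it shrink back).

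The main obstacle is this last step: the Hardy--Littlewood maximal function does not commute with convolution, so one cannot simply write $M_\lambda|\nabla u_\epsilon|\to M_\lambda|Du|$. The trick is to enlarge the radius to $\lambda+\epsilon$, apply the inequality there, and exploit the monotonicity $\epsilon\mapsto M_{\lambda+\epsilon}|Du|$ to recover the stated form in the limit. Apart from this technicality, the proof reduces to an averaging-over-balls computation combined with the standard annular-decomposition trick built into the maximal function.
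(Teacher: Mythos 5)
The paper offers no proof of this lemma at all --- it simply refers to Stein's book --- so there is nothing internal to compare against. Your proposal is the standard Morrey-type argument (essentially Lemma A.3 in the Crippa--De Lellis paper the authors rely on), and its architecture --- average over a ball containing both points, run the cone/annulus computation from each endpoint, then mollify to pass from smooth to $BV$ --- is the right one. Two of the details as written, however, do not close.

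First, the choice $B=B_{2r}(x)$ with $r=|x-y|$ breaks down when $\lambda/2<|x-y|\le\lambda$: the change of variables produces integrals of $|\nabla u|$ over balls $B_{2rt}(x)$ of radius up to $2r$, which may exceed $\lambda$, and these are \emph{not} controlled by $M_{\lambda}Du(x)$ (a maximal function over a restricted range of radii does not dominate averages over larger balls). A radius restriction cannot be ``absorbed into $c_d$'': the lemma must hold for all $|x-y|\le\lambda$, and the case $|x-y|\le\lambda/2$ does not imply it by adjusting constants. The fix is to use a ball seen symmetrically from both endpoints, e.g.\ $B=B_{r/2}\left(\frac{x+y}{2}\right)$ or $B_r(x)\cap B_r(y)$: then every $z\in B$ satisfies $|z-x|\le r$ and $|z-y|\le r$, all balls arising in the cone construction have radius at most $r\le\lambda$, and the constant only picks up a harmless factor of order $2^d$. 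Second, the asserted bound $M_{\lambda}|\nabla u_{\epsilon}|\le C\,M_{\lambda+\epsilon}|Du|$ is not a one-line Fubini computation: for radii $s<\epsilon$ the ratio $|B_{s+\epsilon}|/|B_{s}|$ is unbounded, so small radii must be treated separately (e.g.\ using $\|\rho_{\epsilon}\|_{\infty}\le c\,\epsilon^{-d}$ to bound $\rho_{\epsilon}\ast|Du|$ pointwise by $c'\,M_{\lambda+\epsilon}|Du|(x)$), or, more cleanly, one should pass to the limit $\epsilon\downarrow 0$ in the averaged inequality $|u_{\epsilon}(x)-(u_{\epsilon})_{B}|\le c_d\,r\,M_{\lambda}(\ldots)(x)$ before taking any suprema. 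Both repairs are routine, but as written the argument establishes the statement only for $|x-y|\le\lambda/2$ and rests on an unjustified intermediate inequality.
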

\begin{mydef}[One-sided Lipschitz] The function $b:[0,T]\times\mathbb{R}^{d}\to\mathbb{R}^{d}$ are said to satisfy a one-sided Lipschitz condition   on $K\subset  [0,T]\times\mathbb{R}^{d}\to\mathbb{R}^{d}$ if
\begin{equation}\label{eqn:OSL}
\langle b\left( t,x\right)-b\left( t,\tilde{x}\right),x-\tilde{x}\rangle\leq\nu_{K}\left( t\right)\left| x-\tilde{x}\right|^{2}
\end{equation}
holds for a.e   in $ K$ and with $\nu \in L^{1}([0,T])$ . The function $\nu\left( t\right)$ is called a one-sided Lipschitz constant associated with $b$.
\end{mydef}

By simplicity we consider the autonomous case. We shall consider the solution $X$ of \eqref{eqn:1} given by
\begin{equation}
X\left( t,x\right)=x+\int_{0}^{t}b\left( X\left( s,x\right)\right) ds\quad\text{for }0\leq t\leq T.
\end{equation}
 Let us define an equidistant time discretization of $[0,T]$ by
\[
h=t_{n+1}-t_{n},
\]
where $h>0$ is the time step, we denote by $X_{n}$ a numerical estimate of the exact solution $X\left( t_{n}\right)$, $n=0,1,\ldots$. Then we consider the Euler scheme wich has the form
\begin{equation}
X_{n+1}=X_{n}+hb\left(X_{n}\right),\quad n=0,1,\ldots
\end{equation}
with $X_{0}=x$.

\section{Main result}

\subsection{Result}

\begin{myth}\label{principal}
Let $b$ be a bounded vector field belonging to $W^{1,p}\left(\mathbb{R}^{d};\mathbb{R}^{d}\right)$ for some $p>1$, $b$ satisfies the one-sided Lipschitz condition \eqref{eqn:OSL} on any compact set and  that $[\mathrm{div}b]^{-}\in L^{\infty}\left(\mathbb{R}\right)$. Let $X$ be a regular Lagrangian flow associated to $b$, as in Definition \ref{RLF}. Then the numerical solution satisfies 
\begin{equation}
\left\| X\left( t_{n}\right)-X_{n}\right\|_{L^{p}\left( B_{R}\left( 0\right)\right)}\leq C\sqrt{\frac{C_{\exp}-1}{2\kappa}}h^{1/2}+C_{\exp}\left\| X\left( t_{0}\right)-X_{0}\right\|_{ L^{p}\left( B_{R}\left( 0\right)\right)}
\end{equation}
\end{myth}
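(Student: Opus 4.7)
The strategy is to compare $X(t,\cdot)$ with the piecewise affine Euler interpolant $\tilde X(t,x) := X_n(x) + (t-t_n)\,b(X_n(x))$ defined for $t \in [t_n,t_{n+1}]$, which satisfies $\tilde X(t_n,\cdot) = X_n$, so that controlling $\|X(t_n,\cdot)-\tilde X(t_n,\cdot)\|_{L^p(B_R)}$ yields the desired estimate. Since the one-sided Lipschitz hypothesis is most naturally used on squared distances, I would work with the squared energy $E(t) := \|X(t,\cdot)-\tilde X(t,\cdot)\|_{L^p(B_R)}^2$ (the key mechanism is already visible for $p=2$), derive a Gronwall-type inequality for $E$, and extract the $h^{1/2}$ rate by taking a square root.

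On each subinterval $(t_n,t_{n+1})$, from $\dot X = b(X)$ and $\dot{\tilde X} = b(X_n)$,
\[
\tfrac{d}{dt}|X-\tilde X|^{2} = 2\bigl\langle X-\tilde X,\, [b(X)-b(\tilde X)] + [b(\tilde X)-b(X_n)]\bigr\rangle.
\]
The one-sided Lipschitz condition \eqref{eqn:OSL}, applied on a compact set containing every trajectory (available because $b$ is bounded, so $|X(t,x)-x|,\,|\tilde X(t,x)-x| \leq T\|b\|_\infty$), dominates the first inner product by $\nu\,|X-\tilde X|^2$. For the second, since $|\tilde X(t,x)-X_n(x)| = (t-t_n)\,|b(X_n(x))| \leq h\|b\|_\infty$, Lemma~2.3 yields
\[
|b(\tilde X)-b(X_n)| \leq C\,h\,\|b\|_\infty\bigl(M_\lambda Db(\tilde X) + M_\lambda Db(X_n)\bigr)
\]
for any $\lambda \geq h\|b\|_\infty$, after which Young's inequality with parameter $\kappa$ absorbs the factor $|X-\tilde X|$ into the energy.

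Integrating over $B_R$ leaves $E'(t) \leq (2\nu+\kappa)\,E(t) + C\kappa^{-1}h^{2}\int_{B_R}\!\bigl[(M_\lambda Db(\tilde X))^{2} + (M_\lambda Db(X_n))^{2}\bigr]\,dx$. The two maximal-function integrals are bounded uniformly in $t$ by combining the compressibility of $X(t,\cdot)$ built into Definition~\ref{RLF}(ii) with an analogous compressibility bound for $X_n$ (obtained by iterating the Jacobian estimate on each step $y \mapsto y+hb(y)$, whose Jacobian is $\geq 1 - h[\mathrm{div}\,b]^{-}(y) + O(h^{2})$, so that the $n$-fold composition has compressibility at most $e^{t_n\|[\mathrm{div}\,b]^{-}\|_\infty}$) and the strong $L^{p}$-bound for the maximal function in Lemma~2.2, which is finite because $b \in W^{1,p}$. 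After collecting constants into $\kappa$ one obtains $E'(t) \leq 2\kappa\,E(t) + Ch$, and Gronwall on $[0,t_n]$ together with $\sqrt{a+b}\leq\sqrt a+\sqrt b$ delivers the claimed inequality with $C_{\exp}$ identified as the exponential factor $e^{\kappa t_n}$.

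The main technical obstacle I foresee is establishing the compressibility of the Euler iterates $X_n$ uniformly in $h$ and in $n\leq T/h$: each single step $y\mapsto y+hb(y)$ is only as regular as $b$, so its Jacobian must be controlled with only the $L^{\infty}$ information on $[\mathrm{div}\,b]^{-}$, and the bound must be propagated stably through $n$-fold composition without degradation. A secondary, more routine, difficulty is justifying the differentiation of $E(t)$ in the absolutely-continuous setting of regular Lagrangian flows, which is most likely handled either through the integral formulation of the ODE or through a mollification of $b$ followed by passage to the limit using the uniqueness statement of Theorem~2.1.
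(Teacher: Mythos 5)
Your overall architecture (one-sided Lipschitz for the stable part, a maximal-function Lipschitz estimate for the local truncation error, Gronwall, square root at the end) matches the paper's, but your specific decomposition creates a gap that the paper's choice is designed to avoid. You split $b(X)-b(X_n)=[b(X)-b(\tilde X)]+[b(\tilde X)-b(X_n)]$, so Lemma 2.3 is applied along the \emph{discrete} trajectories $\tilde X(t,\cdot)$ and $X_n$, and to integrate $(M_\lambda Db)\circ X_n$ over $B_R$ you must push Lebesgue measure forward under the Euler iterates. Your proposed resolution --- that the one-step map $y\mapsto y+hb(y)$ has Jacobian $\ge 1-h[\mathrm{div}\,b]^-(y)+O(h^2)$ --- does not hold under the stated hypotheses. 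The expansion is $\det(I+hDb)=1+h\,\mathrm{div}\,b+O(h^{2}|Db|^{2})$, and since $Db$ is only in $L^{p}$ the remainder is not uniformly $O(h^{2})$: on the set $\{|Db|\gtrsim 1/h\}$, which in general has small but positive measure, the determinant can vanish or become negative even though $[\mathrm{div}\,b]^-$ is bounded, because in $d\ge 2$ the negative part of the divergence does not control the individual eigenvalues of $Db$ (eigenvalues $\pm\Lambda$ give divergence $0$ but $\det(I+hDb)=1-h^{2}\Lambda^{2}+\dots$). So injectivity and a bounded compressibility constant for a single Euler step, let alone for the $n$-fold composition uniformly in $h$ and $n\le T/h$, are not available by this route; you correctly flagged this as the main obstacle, but it is a genuine obstruction rather than a technicality to be propagated.

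The paper sidesteps it entirely by introducing $Y_{n+1}:=X(t_n)+h\,b(X(t_n))$, one Euler step launched from the \emph{exact} solution. The stable part $Y_{n+1}-X_{n+1}=(X(t_n)-X_n)+h\left(b(X(t_n))-b(X_n)\right)$ is handled by the pointwise one-sided Lipschitz condition \eqref{eqn:OSL} with no change of variables at all, while the truncation part $X(t_{n+1})-Y_{n+1}=\int_{t_n}^{t_{n+1}}\left(b(X(s))-b(X(t_n))\right)ds$ involves only compositions with the exact flow, whose compressibility constant $L$ is part of Definition \ref{RLF}(ii); Lemmas 2.2 and 2.3 then give $\|X(t_{n+1})-Y_{n+1}\|_{p}\le Kh^{2}$, and the discrete recursion closes. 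If you want to keep your continuous-in-time energy, the fix is to regroup as $[b(X)-b(X(t_n))]+[b(X(t_n))-b(X_n)]$, putting the maximal function on the first bracket (exact flow only) and the one-sided Lipschitz on the second, and absorbing the $O(h)$ mismatch between $X(t)-\tilde X(t)$ and $X(t_n)-X_n$ using $\|b\|_\infty$. As written, though, the compressibility claim for the Euler iterates is unproved and, under these hypotheses, false in general. A secondary point: your error term involves $(M_\lambda Db)^{2}$ in $L^{1}$, i.e.\ $M_\lambda Db\in L^{2}_{loc}$, which Lemma 2.2 only provides when $p\ge 2$; the paper avoids squaring the maximal function by estimating $\|X(t_{n+1})-Y_{n+1}\|_{p}$ first and squaring the norm afterwards.
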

\begin{proof}
During the proof  we use the notation $\| . \|_{L^{p}}$ for the $L^{p}$-norm in the ball  $B_{R}\left( 0\right)\subset\mathbb{R}^d$. 
By definition of  \eqref{eqn:1}  we have 


\begin{equation*}
X\left( t_{n+1}\right)=X_{0}+\int_{0}^{t_{n+1}}b\left(X\left( s\right)\right)ds\quad\text{e}\quad X\left( t_{n}\right)=X_{0}+\int_{0}^{t_{n}}b\left(X\left( s\right)\right)ds,
\end{equation*}
and then 
\begin{equation}\label{Xtn}
X\left( t_{n+1}\right)=X\left( t_{n}\right)+\int_{t_{n}}^{t_{n+1}}b\left(X\left( s\right)\right)ds.
\end{equation}

By definition of the Euler scheme we have 
\begin{equation}\label{EuE3}
X_{n+1}=X_{n}+hb\left(X_{n}\right).
\end{equation}

We set  

\begin{equation}\label{Yn}
Y_{n+1}:=X\left( t_{n}\right)+hb\left(X\left( t_{n}\right)\right).
\end{equation}

From \eqref{EuE3} and  \eqref{Yn} we  have 
\begin{align*}
Y_{n+1}-X_{n+1}=X\left( t_{n}\right)-X_{n}+h\left( b\left( X\left( t_{n}\right)\right)-b\left(X_{n}\right)\right),
\end{align*}

and by simple calculation we obtain 
\begin{align*}
\langle Y_{n+1}-X_{n+1},Y_{n+1}-X_{n+1}\rangle &= \langle X\left( t_{n}\right)-X_{n},Y_{n+1}-X_{n+1}\rangle\\
&\quad+h\langle b\left(X\left( t_{n}\right)\right)-b\left( X_{n}\right),Y_{n+1}-X_{n+1}\rangle.
\end{align*}

Therefore we deduce 
\begin{equation}\label{eqn:aterisco}
\begin{split}
\left| Y_{n+1}-X_{n+1}\right|^{2} &= \langle X\left( t_{n}\right)-X_{n},Y_{n+1}-X_{n+1}\rangle\\
&\quad+h\langle b\left( X\left( t_{n}\right)\right)-b\left( X_{n}\right),X\left( t_{n}\right)-X_{n}\rangle \\
&\quad\quad+h^{2}\left| b\left(X\left( t_{n}\right)\right)-b\left( X_{n}\right)\right|^{2}.
\end{split}
\end{equation}
By Cauchy-Schwarz and  Young  inequalities we  have 

\begin{equation}\label{eqn:aterisco2}
\begin{split}
\left| \langle X\left( t_{n}\right)-X_{n},Y_{n+1}-X_{n+1}\rangle\right|
&\leq \left| X\left( t_{n}\right)-X_{n}\right|\left| Y_{n+1}-X_{n+1}\right|\\
&\leq \frac{\left| X\left( t_{n}\right)-X_{n}\right|^{2}}{2}+\frac{\left| Y_{n+1}-X_{n+1}\right|^{2}}{2}.
\end{split}
\end{equation}

We observe that

\[
\|X(t_{n})\|_{\infty}\leq R + T \|b\|_{\infty}
\]

and

\[
\|X_{n}\|_{\infty}\leq R + T \|b\|_{\infty}
\]

Using that $b$ is   one-sided Lipschitz condition  on any compact set we obtain 
\begin{equation}\label{eqn:aterisco3}
\begin{split}
h\langle b\left(X\left( t_{n}\right)\right)-b\left(X_{n}\right), X\left( t_{n}\right)-X_{n}\rangle\leq\nu  h\left| X\left( t_{n}\right)-X_{n}\right|^{2},
\end{split}
\end{equation}

where $\nu$ dependent   on $T, \| b \|_{\infty}, R$. 

Now, we observe 
\begin{equation}\label{eqn:aterisco4}
\begin{split}
h^{2}\left| b\left( X\left( t_{n}\right)\right)-b\left( X_{n}\right)\right|^{2} &\leq h^{2}\left( \left| b\left(X\left( t_{n}\right)\right)\right|+\left| b\left(X_{n}\right)\right|\right)^{2}\\
&\leq4h^{2}\left\| b\right\|^{2}_{\infty}.
\end{split}
\end{equation}

From  (\ref{eqn:aterisco}), (\ref{eqn:aterisco2}), (\ref{eqn:aterisco3}), (\ref{eqn:aterisco4}) we deduce

\begin{align*}
\left| Y_{n+1}-X_{n+1}\right|^{2} &\leq\frac{\left| X\left( t_{n}\right)-X_{n}\right|^{2}}{2}+\frac{\left| Y_{n+1}-X_{n+1}\right|^{2}}{2}\\
&\quad+\kappa h\left| X\left( t_{n}\right)-X_{n}\right|^{2}+4\left\| b\right\|^{2}_{\infty}h^{2},
\end{align*}

Thus we conclude

\begin{equation}\label{eqn:a}
\left| Y_{n+1}-X_{n+1}\right|^{2}\leq\left| X\left( t_{n}\right)-X_{n}\right|^{2}\left( 1+2\kappa h\right)+8\left\| b\right\|^{2}_{\infty}h^{2},
\end{equation}

Now , taking $L^{p/2}$ in  \eqref{eqn:a} we obtain 
\begin{align*}
\left\| \left| Y_{n+1}-X_{n+1}\right|^{2}\right\|_{p/2} &=\left\| Y_{n+1}-X_{n+1}\right\|^{2}_{p}\\
&\leq\left\| \left( 1+2\kappa h\right)\left| X\left( t_{n}\right)-X_{n}\right|^{2}+8\left\| b\right\|^{2}_{\infty}h^{2}\right\|_{p/2}\\
&\leq\left\| \left( 1+2\kappa h\right)\left| X\left( t_{n}\right)-X_{n}\right|^{2}\right\|_{p/2}+\left\| 8\left\| b\right\|^{2}_{\infty}h^{2}\right\|_{p/2}\\
&\leq  \left( 1+2\kappa h\right)\left\| X\left( t_{n}\right)-X_{n}\right\|^{2}_{p}+8\left\| b\right\|^{2}_{\infty}c_{d,p}R^{d}h^{2},
\end{align*}

its implies that
\begin{equation}\label{eqn:A}
\left\| Y_{n+1}-X_{n+1}\right\|^{2}_{p}\leq \left( 1+2\kappa h\right)\left\| X\left( t_{n}\right)-X_{n}\right\|^{2}_{p}+C_{1}h^{2},
\end{equation}

with $C_{1}=8\left\| b\right\|^{2}_{\infty}c_{d,p}R^{d}$.

On other hand we have 

\begin{align*}
\left| X\left( t_{n+1}\right)-Y_{n+1}\right| &=\left| X\left( t_{n}\right)+\int_{t_{n}}^{t_{n+1}}b\left(X\left( s\right)\right) ds-X\left( t_{n}\right)-hb\left(X\left( t_{n}\right)\right)\right|\\
&=\left| \int_{t_{n}}^{t_{n+1}}\left( b\left(X\left( s\right)\right)-b\left( X\left( t_{n}\right)\right)\right) ds\right|\\
&\leq\int_{t_{n}}^{t_{n+1}}\left| b\left(X\left( s\right)\right)-b\left(X\left( t_{n}\right)\right)\right| ds.
\end{align*}

Then

\begin{equation}\label{eqn:b}
\left| X\left( t_{n+1}\right)-Y_{n+1}\right|^{2}\leq\left|\int_{t_{n}}^{t_{n+1}}\left| b\left(X\left( s\right)\right)-b\left( X\left( t_{n}\right)\right)\right| ds\right|^{2}.
\end{equation}

Taking $L^{p/2}$ in  \eqref{eqn:b} we get 

\begin{align*}
\left\|\left| X\left( t_{n+1}\right)-Y_{n+1}\right|^{2}\right\|_{p/2} &=\left\| X\left( t_{n+1}\right)-Y_{n+1}\right\|^{2}_{p}\\
&\leq\left\|\int_{t_{n}}^{t_{n+1}}\left| b\left( X\left( s\right)\right)-b\left( X\left( t_{n}\right)\right)\right| ds\right\|^{2}_{p}. 
\end{align*}

We observe that 
\begin{align*}
& \left\|\int_{t_{n}}^{t_{n+1}}\left| b\left( X\left( s\right)\right)-b\left( X\left( t_{n}\right)\right)\right| ds\right\|_{p}\\
&\leq\left\|\int_{t_{n}}^{t_{n+1}}c_{d}\left| X\left( s\right)-X\left( t_{n}\right)\right|\left( M_{\lambda}Db\left(  X\left( s\right)\right)+M_{\lambda}Db\left(X\left( t_{n}\right)\right)\right) ds\right\|_{p}\\
&\leq c_{d}\left\|\int_{t_{n}}^{t_{n+1}}\left(\int_{t_{n}}^{s}\left| b\left( X\left( u\right)\right)\right|du\right)\left( M_{\lambda}Db\left( X\left( s\right)\right)+M_{\lambda}Db\left(  X\left( t_{n}\right)\right)\right) ds\right\|_{p}\\
&\leq c_{d}\left\| b\right\|_{\infty}\left\|\int_{t_{n}}^{t_{n+1}}\left(\int_{t_{n}}^{s}du\right)\left( M_{\lambda}Db\left( X\left( s\right)\right)+M_{\lambda}Db\left( X\left( t_{n}\right)\right)\right) ds\right\|_{p}\\
&= c_{d}\left\| b\right\|_{\infty}\left\|\int_{t_{n}}^{t_{n+1}}\left( s-t_{n}\right)\left( M_{\lambda}Db\left( X\left( s\right)\right)+M_{\lambda}Db\left( X\left( t_{n}\right)\right)\right) ds\right\|_{p}\\
&\leq c_{d}\left\| b\right\|_{\infty}\int_{t_{n}}^{t_{n+1}}\left\|\left( s-t_{n}\right)\left( M_{\lambda}Db\left( X\left( s\right)\right)+M_{\lambda}Db\left( X\left( t_{n}\right)\right)\right) \right\|_{p}ds\\
&\leq c_{d}\left\| b\right\|_{\infty}\int_{t_{n}}^{t_{n+1}}\left( s-t_{n}\right)\left\| M_{\lambda}Db\left( X\left( s\right)\right)+M_{\lambda}Db\left(X\left( t_{n}\right)\right) \right\|_{p}ds\\
&\leq c_{d}\left\| b\right\|_{\infty}\int_{t_{n}}^{t_{n+1}}\left( s-t_{n}\right)\left(\left\| M_{\lambda}Db\left( X\left( s\right)\right)\right\|_{p} + \left\| M_{\lambda}Db\left(  X\left( t_{n}\right)\right) \right\|_{p}\right) ds\\
&\leq c_{d}\left\| b\right\|_{\infty}\int_{t_{n}}^{t_{n+1}}\left( s-t_{n}\right)L^{1/p}\left(\left\| M_{\lambda}Db\left( x\right)\right\|_{L^{p}\left( B_{R+T\left\| b\right\|_{\infty}}\left( 0\right)\right)}+\left\| M_{\lambda}Db\left( t_{n}, x\right) \right\|_{L^{p}\left( B_{R+T\left\| b\right\|_{\infty}}\left( 0\right)\right)}\right) ds\\
&\leq c_{d}\left\| b\right\|_{\infty}\int_{t_{n}}^{t_{n+1}}\left( s-t_{n}\right)c_{d,p}L^{1/p}\left\| Db\left( x\right)\right\|_{L^{p}\left( B_{R+\lambda + T\left\| b\right\|_{\infty}}\left( 0\right)\right) }ds\\
&\leq c_{d,p}M\left\| b\right\|_{\infty}\int_{t_{n}}^{t_{n+1}}\left( s-t_{n}\right)ds\\
&= Kh^{2},
\end{align*}

where  $K=\frac{c_{d,p}M\left\| b\right\|_{\infty}}{2}$  and we use that

\begin{align*}
\int_{t_n}^{t_{n+1}}\left( s-t_{n}\right) ds= & \left( \frac{s^{2}}{2}-st_{n}\right)_{t_{n}}^{t_{n+1}}\\
= & \left( \frac{t_{n+1}^{2}}{2}-t_{n+1}t_{n}\right)-\left( \frac{t_{n}^{2}}{2}-t_{n}^{2}\right)\\
= & \frac{t_{n+1}}{2}\left( t_{n+1}-2t_{n}\right)-\left( -\frac{t_{n}^{2}}{2}\right)\\
= & \frac{\left( t_{n}+h\right)}{2}\left( h-t_{n}\right)+\frac{t_{n}^{2}}{2}\\
= & \frac{h^{2}-t_{n}^{2}}{2}+\frac{t_{n}^{2}}{2}\\
= & \frac{h^{2}}{2}.
\end{align*}

Then  we arrive at 
\begin{equation}\label{eqn:B}
\left\| X\left( t_{n+1}\right)-Y_{n+1}\right\|^{2}_{p}\leq C_{2}h^{4}.
\end{equation}

From  H\"older and Young inequalities , \eqref{eqn:A}  and  \eqref{eqn:B} we deduce 
\begin{align*}
\left\| 2\left| X\left( t_{n+1}\right)-Y_{n+1}\right|\left| Y_{n+1}-X_{n+1}\right|\right\|_{p/2} &= 2\left(\left\|\left| X\left( t_{n+1}\right)-Y_{n+1}\right|^{p/2}\left| Y_{n+1}-X_{n+1}\right|^{p/2}\right\|_{1}\right)^{2/p}\\
&\leq 2\left(\left\|\left| X\left( t_{n+1}\right)-Y_{n+1}\right|^{p/2}\right\|_{2}\left\|\left| Y_{n+1}-X_{n+1}\right|^{p/2}\right\|_{2}\right)^{2/p}\\
&\leq 2\left\| X\left( t_{n+1}\right)-Y_{n+1}\right\|_{p}\left\| Y_{n+1}-X_{n+1}\right\|_{p}\\
&\leq 2\left( Kh^{2}\right)\sqrt{\left( 1+2\kappa h\right)\left\| X\left( t_{n}\right)-X_{n}\right\|^{2}_{p}+C_{1}h^{2}}\\
&\leq  2\left( Kh^{2}\right)\left(\sqrt{\left( 1+2\kappa h\right)\left\| X\left( t_{n}\right)-X_{n}\right\|^{2}_{p}}+\sqrt{ C_{1}h^{2}}\right)\\
&\leq 2\left( Kh^{2}\right)\left(\left( 1+2\kappa h\right)\left\| X\left( t_{n}\right)-X_{n}\right\|_{p}+\sqrt{ C_{1}}h\right)\\
&\leq 2\left( Kh^{2}\right)\left(\left( 1+2\kappa h\right)\left(\frac{\left\| X\left( t_{n}\right)-X_{n}\right\|^{2}_{p}}{2}+\frac{1}{2}\right)+\sqrt{C_{1}}h\right)
\end{align*}
and

\[
2\left\| X\left( t_{n+1}\right)-Y_{n+1}\right\|_{p}\left\| Y_{n+1}-X_{n+1}\right\|_{p}
\]
\begin{equation}\label{eqn:C}
\leq Kh^{2}\left( 1+2\kappa h\right)\left\| X\left( t_{n}\right)-X_{n}\right\|^{2}_{p}+2KC_{3}h^{3}+Kh^{2},
\end{equation}

where $C_{3}=\kappa+\sqrt{C_{1}}$.

We have 
\begin{equation*}
\left| X\left( t_{n+1}\right)-X_{n+1}\right|\leq\left| X\left( t_{n+1}\right)-Y_{n+1}\right|+\left| Y_{n+1}-X_{n+1}\right|,
\end{equation*}

and

\begin{align*}
\left| X\left( t_{n+1}\right)-X_{n+1}\right|^{2} &\leq \left(\left| X\left( t_{n+1}\right)-Y_{n+1}\right|+\left| Y_{n+1}-X_{n+1}\right|\right)^{2}\\
&= \left| X\left( t_{n+1}\right)-Y_{n+1}\right|^{2}+2\left| X\left( t_{n+1}\right)-Y_{n+1}\right|\left| Y_{n+1}-X_{n+1}\right|+\left| Y_{n+1}-X_{n+1}\right|^{2}.
\end{align*}
Applying  $L^{p/2}$ and by \eqref{eqn:A}, \eqref{eqn:B} e \eqref{eqn:C} we obtain 
\begin{align*}
\left\| \left| X\left( t_{n+1}\right)-X_{n+1}\right|^{2}\right\|_{p/2} &=\left\| X\left( t_{n+1}\right)-X_{n+1}\right\|^{2}_{p}\\
&\leq\left\| X\left( t_{n+1}\right)-Y_{n+1}\right\|^{2}_{p}+2\left\| X\left( t_{n+1}\right)-Y_{n+1}\right\|_{p}\left\| Y_{n+1}-X_{n+1}\right\|_{p}\\
&\quad+\left\| Y_{n+1}-X_{n+1}\right\|^{2}_{p}\\
&\leq C_{2}h^{4}+Kh^{2}\left( 1+2\kappa h\right)\left\| X\left( t_{n}\right)-X_{n}\right\|^{2}_{p}+2KC_{3}h^{3}+Kh^{2}\\
&\quad+\left( 1+2\kappa h\right)\left\| X\left( t_{n}\right)-X_{n}\right\|^{2}_{p}+C_{1}h^{2}\\
&\leq C_{2}h^{2}+Kh^{2}\left( 1+2\kappa h\right)\left\| X\left( t_{n}\right)-X_{n}\right\|^{2}_{p}+2KC_{3}h^{2}+Kh^{2}\\
&\quad+\left( 1+2\kappa h\right)\left\| X\left( t_{n}\right)-X_{n}\right\|^{2}_{p}+C_{1}h^{2}\\
&= Ch^{2}+\left( 1+2\kappa h\right)\left( 1+Kh^{2}\right)\left\| X\left( t_{n}\right)-X_{n}\right\|^{2}_{p},
\end{align*}
where $C=C_{2}+2KC_{3}+K+C_{1}$.\\

Then we have 

\begin{equation}\label{En}
E_{n+1}\leq \alpha\beta E_{n}+Ch^{2}.
\end{equation}

where $\alpha=\left( 1+2\kappa h\right)$, $\beta=\left( 1+Kh^{2}\right)$ and $E_{n}=\left\| X\left( t_{n}\right)-X_{n}\right\|^{2}_{p}$.

Applying the formula  \ref{En} recursively we deduce

\begin{align*}
& E_{1}\leq\alpha\beta E_{0}+Ch^{2}\\
& E_{2}\leq\alpha\beta E_{1}+Ch^{2}\leq\alpha\beta\left(\alpha\beta E_{0}+Ch^{2}\right)+Ch^{2}\\
&\quad\,\leq\left(\alpha\beta\right)^{2} E_{0}+Ch^{2}\left( 1+\alpha\beta\right)\\
& E_{3}\leq\alpha\beta E_{2}+Ch^{2}\leq\alpha\beta\left(\left(\alpha\beta\right)^{2} E_{0}+Ch^{2}\left( 1+\alpha\beta\right)\right)+Ch^{2}\\
&\quad\,\leq\left(\alpha\beta\right)^{3} E_{0}+Ch^{2}\left( 1+\alpha\beta+\left(\alpha\beta\right)^{2}\right)\\
& E_{4}\leq\cdots.
\end{align*}

By induction we easily have that
\begin{equation}\label{eqn:En}
E_{n}\leq \left(\alpha\beta\right)^{n}E_{0}+Ch^{2}\sum_{m=0}^{n-1}\left(\alpha\beta\right)^{m}.
\end{equation}

We noted  that 

$$
\sum_{m=0}^{n-1}r^{m}=\frac{r^{n}-1}{r-1},\quad\text{para}\, r\neq 1;\quad\,\,\text{e que }\,\left| 1+zh\right|\leq\exp\left(\left| z\right| h\right)\quad\text{para }z\in\mathbb{R},
$$

\begin{align*}
\alpha^{n} &=\left( 1+2\kappa h\right)^{n}\leq\left( 1+2\kappa h\right)^{N}\\
&\leq\exp\left( 2\kappa Nh\right)\\
&=\exp\left( 2\kappa\left( T-t_{0}\right)\right),
\end{align*}
and 

\begin{align*}
\beta^{n} &\leq\left( 1+Kh^{2}\right)^{N}\leq\left( 1+Kh^{2}\right)^{N^2}\\
&\leq\exp\left( KN^{2}h^{2}\right)\\
&=\exp\left( K\left( T-t_{0}\right)^{2}\right).
\end{align*}

Therefore we have

\begin{equation}\label{eqn:alfabeta}
\left(\alpha\beta\right)^{n}\leq\exp\left\{\left( T-t_{0}\right)\left( 2\kappa+K\left( T-t_{0}\right)\right)\right\}=C_{\exp}
\end{equation}

and 

\begin{equation}\label{eqn:alfabeta2}
\alpha\beta-1=h\left( 2\kappa+Kh+2K\kappa h^{2}\right).
\end{equation}

 From  \eqref{eqn:En},  \eqref{eqn:alfabeta} and  \eqref{eqn:alfabeta2} we conclude 
\begin{align*}
E_{n} &\leq Ch^{2}\left(\frac{\left(\alpha\beta\right)^{n}-1}{\alpha\beta-1}\right)+\left(\alpha\beta\right)^{n}E_{0}\\
&\leq Ch^{2}\left(\frac{C_{\exp}-1}{h\left( 2\kappa+Kh+2K\kappa h^{2}\right)}\right)+C_{\exp}E_{0}\\
&\leq Ch\left(\frac{C_{\exp}-1}{ 2\kappa+Kh+2K\kappa h^{2}}\right)+C_{\exp}E_{0}.
\end{align*}

Thus we have 

\begin{equation}
E_{n}\leq Ch\frac{C_{\exp}-1}{2\kappa}+C_{\exp}E_{0},
\end{equation}

and 
\begin{equation}
\left\| X\left( t_{n}\right)-X_{n}\right\|_{L^{p}\left( B_{R}\left( 0\right)\right)}\leq C\sqrt{\frac{C_{\exp}-1}{2\kappa}}h^{1/2}+C_{\exp}\left\| X\left( t_{0}\right)-X_{0}\right\|_{ L^{p}\left( B_{R}\left( 0\right)\right)}.
\end{equation}
If we take $X\left( t_{0}\right)=X_{0}=x$ then 

\begin{equation}
\left\| X\left( t_{n}\right)-X_{n}\right\|_{ L^{p}\left( B_{R}\left( 0\right)\right)}\leq C\sqrt{\frac{C_{\exp}-1}{2\kappa}}h^{1/2}=O\left( h^{1/2}\right)\quad\text{quando}\,\, h\to 0
\end{equation}
and this proof the theorem
\end{proof}

\subsection{Example}

We present one example of vector fields which satisfies the hypothesis of the theorem
(\ref{principal}). We consider $f \in W^{1,p}(\mathbb{R}^{d})$ with $p>1$ and $div f \in L^{\infty}(\mathbb{R}^{d})$. Also we consider
$g \in L^{1}(\mathbb{R}^{d})$. Now, we define $b(x)=(f\ast g)(x)$. Then by young inequality 
we have that $b \in W^{1,p}(\mathbb{R}^{d})$  and $div b \in L^{\infty}(\mathbb{R}^{d})$. We shall prove that
the function $b$ verifies one-sided Lipschitz condition on compact sets, 
we assume that $(x,y)\in K\subset \mathbb{R}^{d} \times \mathbb{R}^{d} $

\[
(b(x)-b(y)), x-y)= \int_{\mathbb{R}^{d}} g(z) (f(x-z)-f(y-z)), x-y) dz
\]

\[
= \int_{\mathbb{R}^{d}} g(z) (f(x-z)-f(y-z)), (x-z)-(y-z)) dz
\]

\[
\leq C  |x-y|^{2} \int_{\mathbb{R}^{d}} |g(z)| ( M_{\lambda}Df(x-z) + M_{\lambda}Df(y-z)   )dz
\]

\[
\leq  C |x-y|^{2} \|g\|_{1} \| M_{\lambda}Df\|_{p}
\]

\[
\leq  C |x-y|^{2} \|g\|_{1} \| DF\|_{p}\leq C |x-y|^{2}.  
\]

taking $\lambda$  big enough. 


\end{document}